\documentclass[11pt,a4paper]{amsart}
\usepackage{amsmath,amsthm,amssymb,latexsym}
\usepackage{graphicx}

\newcommand{\re}{\text{\rm Re\,}}
\newcommand{\im}{\text{\rm Im\,}}

\newcommand{\bd}{{\mathbb{D}}}

\newcommand{\br}{{\mathbb{R}}}

\newcommand{\bz}{{\mathbb{Z}}}
\newcommand{\bc}{{\mathbb{C}}}

\newcommand{\csp}{{\mathcal{S}_p}}


\renewcommand{\a}{\alpha}
\renewcommand{\b}{\beta}
\renewcommand{\l}{\lambda}

\newcommand{\s}{\sigma}

\renewcommand{\r}{\rho}

\newcommand{\dd}{\Delta}
\renewcommand{\o}{\omega}

\renewcommand{\O}{\Omega}

\newcommand{\g}{\gamma}

\newcommand{\eps}{\varepsilon}

\newcommand{\nt}{\noindent}

\newcommand{\ovl}{\overline}

\newcommand{\lp}{\left(}
\newcommand{\rp}{\right)}

\DeclareMathOperator{\di}{\rm d}


\allowdisplaybreaks
\numberwithin{equation}{section}

\newtheorem{theorem}{Theorem}[section]

\newtheorem{lemma}[theorem]{Lemma}

\theoremstyle{definition}

\newtheorem{remark}[theorem]{Remark}

\begin{document}

\title[Infinite band Schr\"odinger operators]
{On complex perturbations of infinite band Schr\"odinger
operators}
\author[L. Golinskii, S. Kupin]{L. Golinskii and S. Kupin}

\address{Mathematics Division, Institute for Low Temperature Physics and
Engineering, 47 Lenin ave., Kharkov 61103, Ukraine}
\email{golinskii@ilt.kharkov.ua}

\address{IMB, Universit\'e Bordeaux 1, 351 cours de la Lib\'eration, 33405 Talence Cedex, France}
\email{skupin@math.u-bordeaux1.fr}

\date{\today}

\keywords{Schr\"odinger operators,  finite-band spectrum, Lieb--Thirring type inequalities,
relatively compact perturbations, resolvent identity}
\subjclass[2000]{Primary: 35P20; Secondary: 47A75, 47A55}

\maketitle

\begin{center}
{\it Dedicated to Yu. M. Berezanskii\\
on occasion of  his 90-th birthday}

\bigskip
\end{center}

\begin{abstract}
Let $H_0=-\frac{d^2}{dx^2}+V_0$ be an infinite band Schr\"odinger operator on $L^2(\br)$ with
a real-valued potential $V_0\in L^\infty(\br)$. We study its complex perturbation $H=H_0+V$,
defined in the form sense, and obtain the Lieb--Thirring type inequalities for the rate of
convergence of the discrete spectrum of $H$ to the joint essential spectrum. The assumptions
on $V$ vary depending on the sign of $\re V$.
\end{abstract}

\section*{Introduction}
\label{s0}

Different characteristics of the distribution of the discrete spectrum for non-self-adjoint
perturbations of model differential self-adjoint operators, e.g., a Laplacian on $\br^d$,
a discrete Laplacian on $\bz^d$, etc., were studied in a number of papers
(see Frank--Laptev--Lieb--Seiringer \cite{fla}, Borichev--Golinskii--Kupin \cite{bgk},
Demuth--Hansmann--Katriel \cite{dhk}). This paper focuses on complex perturbations of one dimensional
Schr\"odinger operators with infinite band spectrum and certain behavior of the lengths of its gaps
(the case of finite band Schr\"odinger operators was studied in \cite{gk}).

So, consider a real-valued measurable function $V_0$ on $\br$ and denote by $M_{V_0}$ a maximal
multiplication operator by $V_0$. The standing assumption is that the Schr\"odinger operator
\begin{equation}\label{hill}
H_0=-\dd+M_{V_0}, \qquad \dd:=\frac{d^2}{dx^2}.
\end{equation}
is self-adjoint, $H_0^*=H_0$, and its spectrum $\s(H_0)$ is infinite band, i.e.,
\begin{equation}\label{infband}
\s(H_0)=\s_{ess}(H_0)=I=\bigcup^\infty_{k=1} [a_k, b_k], \qquad a_k\to +\infty.
\end{equation}
We say that the gaps are relatively bounded if
\begin{equation}\label{irelbound}
r=r(I):=\sup_k \frac{r_k}{b_k}<\infty, \qquad r_k:=a_{k+1}-b_k
\end{equation}
is the length of $k$' gap in \eqref{infband}. A typical example here is the Hill operator with
a periodic potential (see \cite[Section XIII.16]{rs4}). It is well known (see \cite{maos})
that $r_k\to 0$ as $k\to\infty$ for potentials $V_0$ from $L^2$ on a period, so \eqref{irelbound}
obviously holds for such potentials.

Furthermore, consider the form sum
\begin{equation}\label{perturbation}
H=H_0+M_V,
\end{equation}
where $V$ is a complex-valued potential. If $M_V$ is a relatively compact perturbation of $H_0$,
that is, $\rm{dom}(M_V)\supset \rm{dom}(H_0)$, and $M_V(H_0-z)^{-1}$ is a compact operator for
$z\in\r(H_0)$, then, by the celebrated theorem of Weyl (see, e.g., \cite[Section IV.5.6]{kato}), $\s_{ess}(H)=\s_{ess}(H_0)$ and
$$ \s(H)=I\,\dot\cup\,\s_d(H) $$
(disjoint union), the discrete spectrum $\s_d(H)$ of $H$, i.e., the set of isolated eigenvalues
of finite algebraic multiplicity, can accumulate only on $I$. The main goal of the paper is to
obtain certain quantitative bounds for the rate of this accumulation.

The assumption on the background $V_0$ looks as follows
\begin{equation}\label{hyp1}
V_0\ge0, \qquad V_0\in L^\infty(\br).
\end{equation}
As for the perturbation $V$ the conditions will vary depending on the sign of $\re V$. Precisely, 
for general $V$'s we assume that
\begin{equation}\label{hyp2}
V\in L^p(\br), \qquad p\ge2,
\end{equation}
and for accretive perturbations with $\re V\ge0$ we put
\begin{equation}\label{hyp3}
V\in L^p(\br), \qquad p>1.
\end{equation}
Under assumptions \eqref{hyp1}--\eqref{hyp3} $H$ is a well-defined, closed and $m$-sectorial operator, 
and there is a number $\o_1\le 0$ such that
\begin{equation}\label{inclus}
\s(H)\subset \ovl{N(H)}\subset \{z: \re z\ge\o_1\},
\end{equation}
where $N(H)=\{(Hf,f): f\in {\rm dom}(H), \ \|f\|_2\le 1\}$ is the numerical range of $H$ (see, e.g.,
\cite[Chapter VI]{kato}). Moreover, $H$ appears to be a relatively compact (even $\csp$) perturbation 
of $H_0$, $\csp$ being the Schatten--von Neumann class of compact operators.

Denote by $\di(z,E)$ the Euclidian distance from a point $z\in\bc$ to a set $E\subset\br$.

\begin{theorem}\label{t1} Let $H_0$ be the Schr\"odinger operator \eqref{hill} with $V_0$ satisfying 
$\eqref{hyp1}$. Assume that $H_0$ is the infinite band operator with the spectrum
$$ \s(H_0)=\s_{ess}(H_0)=\bigcup^{\infty}_{k=1}[a_k,b_k], \ \  0\le a_1<b_1<a_2<b_2<\ldots, 
 \ \ a_n\to+\infty, $$
and the lengths of gaps are relatively bounded $\eqref{irelbound}$. Then for the perturbation $H$ $\eqref{perturbation}$
with $V$ $\eqref{hyp2}$ and for each $\o<\o_1$ $\eqref{inclus}$ the following Lieb--Thirring type 
inequality
\begin{equation}\label{lith1}
\sum_{z\in\s_d(H)} \frac{\di^p(z,I)}{(|z-\o|+|\o|)^{2p}}\le
\frac{C(p,I)\,\|V\|_p^p}{(\o_1-\o)^p|\o|^{p-1/2}}\left(1+\frac{\|V_0\|_\infty}{a_1+|\o|}\right)^p,
\end{equation}
holds, where a positive constant $C(p,I)$ depends on $p$ and $I=\s(H_0)$.
\end{theorem}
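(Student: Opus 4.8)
The plan is to realize the discrete spectrum of $H$ as the zero set of a holomorphic function on $\bc\setminus I$, and then to extract the summability \eqref{lith1} from a Blaschke-type bound for such functions on the band complement. The first step is a quantitative Schatten estimate. Writing $K(z):=M_V(H_0-z)^{-1}$, the second resolvent identity
$$ K(z)=M_V(-\dd-z)^{-1}-M_V(-\dd-z)^{-1}M_{V_0}(H_0-z)^{-1} $$
reduces matters to the free term $M_V(-\dd-z)^{-1}$. For the latter the Kato--Seiler--Simon inequality bounds the $\csp$-norm ($p\ge2$) by $\|V\|_p$ times the $L^p(d\xi)$-norm of $(\xi^2-z)^{-1}$; evaluated at the real point $z=\o\le0$ this gives $\|M_V(-\dd-\o)^{-1}\|_{\csp}\le C_p\,\|V\|_p\,|\o|^{1/(2p)-1}$, which is exactly the source of the factor $|\o|^{-(p-1/2)}$ in \eqref{lith1}. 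The correction term is handled by $\|M_{V_0}\|=\|V_0\|_\infty$ together with $\|(H_0-\o)^{-1}\|=(a_1+|\o|)^{-1}$, producing $(1+\|V_0\|_\infty/(a_1+|\o|))^p$. Raising to the $p$-th power yields
$$ \|K(\o)\|_{\csp}^p\le C(p)\,\|V\|_p^p\,|\o|^{1/2-p}\Big(1+\frac{\|V_0\|_\infty}{a_1+|\o|}\Big)^p. $$

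Next I would pass to general $z$ and to the determinant. Since $\o<\o_1$ lies to the left of $\ovl{N(H)}$ by \eqref{inclus}, the identity $K(z)=K(\o)\big(I+(z-\o)(H_0-z)^{-1}\big)$ together with $\|(H_0-z)^{-1}\|=\di(z,I)^{-1}$ gives the pointwise control
$$ \|K(z)\|_{\csp}\le\|K(\o)\|_{\csp}\Big(1+\frac{|z-\o|}{\di(z,I)}\Big). $$
I then introduce the regularized perturbation determinant $h(z):=\det_{\lceil p\rceil}\big(I+K(z)\big)$, which by relative compactness of $M_V$ is holomorphic on $\bc\setminus I$, satisfies $h(z)\to1$ as $z\to\infty$, and vanishes precisely on $\s_d(H)$ with multiplicities equal to the algebraic multiplicities of the eigenvalues. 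The standard bound $\log|h(z)|\le\Gamma_p\|K(z)\|_{\csp}^p$ combined with the two previous displays produces a growth estimate of the form $\log|h(z)|\le A\,(|z-\o|+|\o|)^p/\di(z,I)^p$ near $I$, with $A\asymp\|K(\o)\|_{\csp}^p$, while $h$ is quantitatively bounded away from $0$ at the base point $\o$ because $\di(\o,\s(H))\ge\o_1-\o$ forces $|h(\o)|\ge\exp(-\Gamma_p\|K(\o)\|_{\csp}^p)$.

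Finally I would feed this into a Blaschke-type theorem for the domain $\bc\setminus I$ (in the spirit of \cite{bgk}), converting the upper bound on $\log|h(z)|$ together with the lower bound at $\o$ into the weighted sum
$$ \sum_{z\in\s_d(H)}\frac{\di^p(z,I)}{(|z-\o|+|\o|)^{2p}}\le \frac{C(p,I)\,A}{(\o_1-\o)^p}; $$
here the square in the denominator reflects that the weight is built from the Green's function of the band complement with pole carried to infinity, while the factor $(\o_1-\o)^{-p}$ comes from the base-point normalization. Substituting $A\asymp\|K(\o)\|_{\csp}^p$ and the bound from the first step assembles exactly into \eqref{lith1}. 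The delicate ingredient, and the main obstacle, is verifying the hypotheses of this theorem \emph{uniformly over the infinitely many bands}: the potential-theoretic geometry of $\bc\setminus I$ (harmonic measure and the relevant Green's/conformal estimates) must be controlled independently of $k$, and this is precisely where the relatively bounded gaps condition \eqref{irelbound} enters, letting one compare $\bc\setminus I$ with $\bc\setminus[a_1,\infty)$ and sum the gap contributions so that $C(p,I)$ remains finite.
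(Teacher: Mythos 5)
Your opening step coincides with the paper's: the Kato--Seiler--Simon bound for $\|M_V(-\dd-\o)^{-1}\|_{\csp}$, together with the factor $\bigl(1+\|V_0\|_\infty/(a_1+|\o|)\bigr)$ coming from $\|M_{V_0}\|\,\|R(\o,H_0)\|$, is exactly the paper's estimate \eqref{boundw} evaluated at $z=\o$, and it is correct. After that, however, the proposal diverges into a perturbation-determinant/Blaschke-condition scheme whose decisive ingredient --- a Blaschke-type theorem for the domain $\bc\setminus I$, where $I$ is an \emph{infinite} union of bands accumulating at $+\infty$ --- is neither stated nor proved; you yourself flag the uniform-over-all-bands verification of its hypotheses as ``the main obstacle''. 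No such result can be quoted off the shelf: \cite{bgk} works on the unit disk (and its Jacobi application concerns a single band), and transplanting it to $\bc\setminus I$ is precisely the uniform potential-theoretic control you postpone. The paper is structured to avoid this difficulty altogether: it applies Hansmann's abstract inequality \eqref{hans} from \cite{han1} not in the $z$-plane but to the pair of \emph{bounded} operators $A_0=R(\o,H_0)$, $A=R(\o,H)$ (the unperturbed one self-adjoint), and then transfers the resulting distance estimate back to the $z$-plane via the elementary distortion Lemma \ref{l1}; the relatively-bounded-gaps hypothesis \eqref{irelbound} is consumed entirely by that lemma (estimate \eqref{distor3}), and the weight $(|z-\o|+|\o|)^{-2p}$ comes from the distortion of $\l_\o$, not from any Green's function of the band complement. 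So your plan leaves the central analytic step of the proof unresolved.

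There is also a local flaw: the lower bound $|h(\o)|\ge\exp(-\Gamma_p\|K(\o)\|_{\csp}^p)$ does not follow from $\di(\o,\s(H))\ge\o_1-\o$. Regularized determinants satisfy \emph{upper} bounds of that form; a lower bound at a regular point requires quantitative invertibility of $I+K(\o)$, i.e.\ control of $\|(I+K(\o))^{-1}\|=\|I-M_VR(\o,H)\|$, which is where the numerical-range estimate \eqref{resnum} would have to enter, and even then the resulting inequality is of a different (worse) shape. Relatedly, in your final display the factor $(\o_1-\o)^{-p}$ is attributed to an unspecified ``base-point normalization'' of the hypothetical Blaschke theorem, whereas in the paper it arises transparently as $\|R(\o,H)\|^p\le(\o_1-\o)^{-p}$ multiplying $\|W(\o)\|_{\csp}^p$ in \eqref{upbound2}. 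In short: your first step is right, but the two genuine gaps --- the missing Blaschke-type theorem for infinite-band domains and the unjustified determinant lower bound --- each prevent the argument from reaching \eqref{lith1}.
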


\begin{remark}
If we take $\o<\o_1-1$, bound \eqref{lith1} can be simplified. Indeed, now $\o_1-\o>1$,
$$ |z-\o|<|\o|(1+|z|), \qquad 1<a_1+|\o|<|\o|(1+a_1), $$
and so
\begin{equation}\label{lith2}
\sum_{z\in\s_d(H)} \frac{\di^p(z,I)}{(1+|z|)^{2p}}
\le C(p,I)|\o|^{p+1/2}\,(1+\|V_0\|_\infty)^p\,\|V\|_p^p.
\end{equation}
\end{remark}

There is an elementary way to specify $\o$ and eliminate it from the final expression. 
The price we pay is an additional factor in the right hand side.

\begin{theorem}\label{t2}
Under assumptions $\eqref{hyp1}$, $\eqref{hyp2}$
\begin{equation}\label{e1041}
\sum_{z\in\s_d(H)}\frac{\di^p(z, I)}{(1+|z|)^{2p}}\le C(p,I) (1+\|V_0\|_\infty)^p\,(1+\|V\|_p)^{p\frac{2p+1}{2p-1}}\,
\|V\|_p^p,
\end{equation}
where a positive constant $C(p,I)$ depends on $p$ and $I=\s(H_0)$.
\end{theorem}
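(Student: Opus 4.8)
The plan is to deduce \eqref{e1041} from the $\o$-dependent estimate \eqref{lith2} of the Remark by committing to an explicit, $\|V\|_p$-dependent value of the spectral parameter $\o$. Since \eqref{lith2} holds for every $\o<\o_1-1$ and its right-hand side depends on $\o$ only through the monomial $|\o|^{p+1/2}$, the whole task reduces to two things: first, a quantitative lower bound for $\o_1$ in terms of $\|V\|_p$, which is what allows one to select an admissible $\o$ whose modulus we can control; and second, a check that the resulting power of $(1+\|V\|_p)$ is exactly $p\frac{2p+1}{2p-1}$. The point is that, while $\o$ must lie below $\o_1-1$, and $\o_1$ may be pushed arbitrarily far to the left as $\|V\|_p$ grows, the rate at which it can move left is governed by the form-boundedness of $M_V$ relative to $H_0$.

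First I would estimate $\o_1$ from below. For $f$ in the form domain with $\|f\|_2=1$ one has $\re(Hf,f)=(H_0f,f)+\int_{\br}\re V\,|f|^2\ge (H_0f,f)-\int_{\br}|V|\,|f|^2$. Because $V_0\ge0$ by \eqref{hyp1}, $H_0\ge-\dd\ge0$ and hence $\|f'\|_2^2\le (H_0f,f)=:a$. By Hölder with $1/p+1/p'=1$ and the one-dimensional Gagliardo--Nirenberg inequality,
$$ \int_{\br}|V|\,|f|^2\le \|V\|_p\,\|f\|_{2p'}^2\le c\,\|V\|_p\,\|f'\|_2^{1/p}\le c\,\|V\|_p\,a^{1/(2p)}, $$
the exponent $1/(2p)$ being exactly the one dictated by scaling for $q=2p'$. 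Minimizing the scalar function $a\mapsto a-c\,\|V\|_p\,a^{1/(2p)}$ over $a\ge0$ gives a minimum of order $-c_p\,\|V\|_p^{2p/(2p-1)}$, whence
$$ \re(Hf,f)\ge -c_p\,\|V\|_p^{2p/(2p-1)},\qquad \o_1\ge -c_p\,\|V\|_p^{2p/(2p-1)}. $$
This is the crucial quantitative refinement of \eqref{inclus}; note $p\ge2$ keeps $2p'\le4$, so the interpolation is unproblematic.

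Next I would fix $\o:=-c_p'\,(1+\|V\|_p)^{2p/(2p-1)}$ with a constant $c_p'$ depending only on $p$, chosen large enough that the previous display forces $\o<\o_1-1$; this is legitimate since $(1+\|V\|_p)^{2p/(2p-1)}\ge1$ absorbs the additive $-1$ and the constant $c_p$. Feeding this $\o$ into \eqref{lith2} and using $|\o|^{p+1/2}=(c_p')^{p+1/2}(1+\|V\|_p)^{(p+1/2)\,2p/(2p-1)}$ together with the arithmetic identity $\bigl(p+\tfrac12\bigr)\cdot\tfrac{2p}{2p-1}=p\,\tfrac{2p+1}{2p-1}$ yields
$$ \sum_{z\in\s_d(H)}\frac{\di^p(z,I)}{(1+|z|)^{2p}}\le C(p,I)\,(c_p')^{p+1/2}\,(1+\|V_0\|_\infty)^p\,(1+\|V\|_p)^{p\frac{2p+1}{2p-1}}\,\|V\|_p^p, $$
and absorbing the $p$-dependent constant into $C(p,I)$ produces exactly \eqref{e1041}.

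The main obstacle is the second step, i.e.\ pinning down the lower bound for $\o_1$ with the \emph{sharp} exponent $2p/(2p-1)$; everything else is bookkeeping already prepared by the Remark. The delicate points there are getting the correct Gagliardo--Nirenberg exponent $1/(2p)$ from scaling and carrying the scalar minimization through cleanly, since a suboptimal power of $\|V\|_p$ at this stage would propagate into a wrong final exponent. Once $|\o_1|\le c_p\,\|V\|_p^{2p/(2p-1)}$ is in hand, the elimination of $\o$ is immediate.
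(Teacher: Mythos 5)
Your proposal is correct, and it takes a genuinely different route from the paper's own proof. The paper never touches the numerical range in the proof of Theorem \ref{t2}: it shows by a Neumann-series argument that the set $\O$, and in particular the half-plane $\{\re z<\o'/2\}$ with $|\o'|\sim(1+\|V\|_p)^{2p/(2p-1)}$ as in \eqref{omega11}, lies in $\rho(H_0)\cap\rho(H)$ (there $\|W(z)\|<\tfrac12$ by \eqref{winf}, so $H-z=(1+W(z))(H_0-z)$ is invertible), and then bounds the resolvent difference through the factorization $R(z,H)-R(z,H_0)=-R(z,H_0)(1+W(z))^{-1}W(z)$, which gives $\|R(\o',H)-R(\o',H_0)\|_{\csp}\le\|V\|_p/\bigl((1+\|V_0\|_\infty)(1+\|V\|_p)\bigr)$; Hansmann's inequality and Lemma \ref{l1} then finish as in Theorem \ref{t1}. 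You instead reuse the Remark's bound \eqref{lith2} as a black box and supply the one missing quantitative ingredient, namely $\o_1\ge-c_p\|V\|_p^{2p/(2p-1)}$, via H\"older, the one-dimensional Gagliardo--Nirenberg inequality with the correct exponent $1/(2p)$, and a scalar minimization; this is in effect a quantitative rerun of the KLMN form-boundedness argument behind \eqref{inclus}. Two small points deserve attention, both harmless: first, your argument needs Theorem \ref{t1} and its Remark to hold for \emph{any} $\o_1\le0$ satisfying \eqref{inclus}, not just one preferred value — inspection of the paper's proof confirms this, since $\o_1$ enters only through \eqref{resnum}; second, for $f\in{\rm dom}(H)$ one should read $(H_0f,f)$ as the quadratic form $\|f'\|_2^2+\int V_0|f|^2$ (operator domains of $H$ and $H_0$ need not match for a form sum), and the numerical range is taken over $\|f\|_2\le1$, which your bound covers by homogeneity since it is nonpositive. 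It is no accident that both proofs land on the same scale $|\o|\sim(1+\|V\|_p)^{2p/(2p-1)}$: in the paper it comes from the Kato--Seiler--Simon bound \eqref{e105} through the condition $|z|^{1-1/2p}>4C_1(1+\|V\|_p)$, in yours from optimizing the form bound, and this common scaling is what produces the exponent $p\frac{2p+1}{2p-1}$ in both cases — your arithmetic $(p+\tfrac12)\cdot\tfrac{2p}{2p-1}=p\tfrac{2p+1}{2p-1}$ and the choice $c_p'\ge c_p+2$ ensuring $\o<\o_1-1$ check out. The paper's route is self-contained at the operator level and yields the resolvent-set inclusion $\O\subset\rho(H)$ as a by-product; yours is more economical, reusing Theorem \ref{t1} rather than re-proving it, and gives an explicit lower bound on $\inf\re N(H)$ in terms of $\|V\|_p$, which is of independent interest.
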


Denote $\bd_+=\{|z|<1\}$, $\bd_-=\{|z|\ge1\}$.
\begin{theorem}\label{t3}
Let $\re V\ge0$. Under assumptions $\eqref{hyp1}$, $\eqref{hyp3}$ the following Lieb--Thirring 
type inequality holds for each $0<\eps<1$
\begin{equation}\label{lith3}
\sum_{z\in\s_d(H)\cap\bd_+} \frac{\di^p(z, I)}{|z|^{1/2-\eps}}+\sum_{z\in\s_d(H)\cap\bd_-} 
\frac{\di^p(z, I)}{|z|^{1/2+\eps}} \le C(p,I,\eps)\|V\|_p^p.
\end{equation}
\end{theorem}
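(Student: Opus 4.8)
The plan is to reduce the counting of $\s_d(H)$ to the distribution of zeros of a regularized perturbation determinant, and then feed a growth bound for that determinant into a Blaschke-type theorem on the domain $\bbc\setminus I$. First I would set, for $z\in\r(H_0)=\bc\setminus I$,
$$ h(z):=\det{}_{\lceil p\rceil}\lp I+M_V(H_0-z)^{-1}\rp, $$
which is well defined because $M_V(H_0-z)^{-1}\in\csp$, as recorded after \eqref{inclus}. Standard perturbation-determinant theory identifies the zeros of $h$ in $\bc\setminus I$, with multiplicities, with the points of $\s_d(H)$, while $M_V(H_0-z)^{-1}\to0$ as $z\to\infty$ gives the normalization $h(\infty)=1$. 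Thus Theorem \ref{t3} becomes a statement about the zeros of the single analytic function $h$.

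Next I would derive the growth bound. Using $|\det{}_{\lceil p\rceil}(I+A)|\le\exp(\g_p\|A\|_\csp^p)$ with $A=M_V(H_0-z)^{-1}$, it suffices to control $\|M_V(H_0-z)^{-1}\|_\csp$, and here the accretive hypothesis enters decisively. Since $V_0\ge0$ we have $H_0\ge0$ and $-\dd\le H_0\le-\dd+\|V_0\|_\infty$, while $\re V\ge0$ confines $N(H)$, and hence $\s_d(H)$, to $\{\re z\ge0\}$, so that one may take $\o_1=0$. Writing $M_V=|V|^{1/2}\,u\,|V|^{1/2}$ with $u=V/|V|$, $\|u\|_\infty=1$, and factoring
$$ |V|^{1/2}(H_0-z)^{-1}|V|^{1/2}=\lb|V|^{1/2}(H_0-\z)^{-1/2}\rb\lb(H_0-\z)^{1/2}(H_0-z)^{-1}(H_0-\z)^{1/2}\rb\lb(H_0-\z)^{-1/2}|V|^{1/2}\rb, $$
I would estimate the two outer factors in $\css_{2p}$ by a Kato--Seiler--Simon inequality, legitimate since $2p>2$ and $|V|^{1/2}\in L^{2p}$, the comparison $-\dd\le H_0\le-\dd+\|V_0\|_\infty$ reducing them to the free resolvent; the bounded middle factor is handled by the spectral theorem for $H_0$, its leading singularity being $\di(z,I)^{-1}$. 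Collecting these yields a majorant $\log|h(z)|\le C(p,I)\,\|V\|_p^p\,\pp(z)$ in which $\pp$ is singular like a power of $\di(z,I)^{-1}$ along the bands and decays like a power of $|z|^{-1}$ at infinity, the decay carrying the extra $|z|^{-1/2}$ produced by the free resolvent and the half-plane localization — a gain unavailable without accretivity and responsible for the weak weights $|z|^{-1/2\mp\eps}$.

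Finally I would invoke the distribution-of-zeros theorem of \cite{bgk} on $\bbc\setminus I$; the relatively bounded gaps \eqref{irelbound} are exactly what make the underlying conformal and geometric estimates on this infinite-band domain uniform, so that the resulting constant stays finite. That theorem turns a growth majorant of the above type into a summability statement weighting each zero $z$ by a power of $\di(z,I)$ and a power of $|z|$ governed by the behaviour of $\pp$ at the two distinguished boundary points $z=0$ and $z=\infty$. Splitting the sum over $\bd_+$ and $\bd_-$ and absorbing the unavoidable Blaschke-type loss into $\eps$ should then reproduce \eqref{lith3}.

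The main obstacle is twofold. First, the growth estimate must be pushed through in the genuinely non-Hilbert--Schmidt range $1<p<2$, where the $p\ge2$ argument behind Theorems \ref{t1}--\ref{t2} fails; this is precisely where $\re V\ge0$ must be exploited, both to confine $N(H)$ to $\{\re z\ge0\}$ and to keep the middle factor $(H_0-\z)^{1/2}(H_0-z)^{-1}(H_0-\z)^{1/2}$ under control for a judicious base point $\z$. Second, one must match the powers of $\di(z,I)$ and of $|z|$ emerging from the zero-distribution theorem to the precise exponents $p$ and $1/2\pm\eps$ in \eqref{lith3}; tracking the square-root distortion of $\di(z,I)$ at the band edges through the conformal geometry of $\bbc\setminus I$, uniformly over the infinitely many bands via \eqref{irelbound}, is the real computational heart of the argument.
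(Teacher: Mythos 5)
Your strategy --- a regularized perturbation determinant combined with the Blaschke-type zero-distribution theorem of \cite{bgk} --- is genuinely different from the paper's, but it founders on the geometry of the resolvent domain. The theorem of \cite{bgk} concerns analytic functions on the unit disk, and its operator applications always proceed by conformally mapping the resolvent set onto the disk (for Jacobi matrices, $\bc\setminus[-2,2]$ via the Joukowski map). Here that is impossible: since $a_k\to+\infty$, the set $I\cup\{\infty\}$ is a \emph{disconnected} compact subset of the Riemann sphere --- each band $[a_k,b_k]$, and the point $\infty$ itself, is a separate connected component --- so $\bc\setminus I$ is infinitely connected and admits no conformal map onto $\bd$. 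For the same reason $\infty$ is a \emph{boundary} point of $\bc\setminus I$, not an interior point, so even your normalization $h(\infty)=1$ is meaningless here; and passing to the universal cover destroys Blaschke-type summability, since every eigenvalue lifts to infinitely many preimages. Neither \cite{bgk} nor anything else in the paper's bibliography supplies a zero-distribution theorem for such domains: the ``uniform conformal and geometric estimates via \eqref{irelbound}'' you invoke are precisely the missing ingredient, not a routine verification. The paper's proof is built to avoid exactly this obstruction. It applies Hansmann's operator inequality \eqref{hans} to the resolvent pair $R(-a,H_0)$, $R(-a,H)$ for \emph{every} $a>0$ (accretivity gives $\o_1=0$, and the argument of \cite[Theorem 3.2]{han2011} yields $\|R(-a,H)-R(-a,H_0)\|_\csp^p\le C(p)\,a^{-(2p-1/2)}\|V\|_p^p$ for all $p>1$), transfers distances back through the elementary distortion Lemma \ref{l1} for the scalar map $\l_\o(z)=(z-\o)^{-1}$ --- the only point where the band structure and \eqref{irelbound} enter --- and then produces the weights $|z|^{-1/2\pm\eps}$ not from any boundary analysis but by integrating the resulting one-parameter family \eqref{lith4} in $a$ against $a^{2p-3/2+\eps}(1+a)^{-2\eps}\,da$ over $(0,\infty)$. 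No complex analysis on $\bc\setminus I$ occurs anywhere.

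There is a second, independent gap at your very first step. In the range $1<p<2$ --- the essential new case of Theorem \ref{t3}, as you yourself note --- the operator $M_V(H_0-z)^{-1}$ with $V\in L^p$ need not belong to $\csp$, nor even to $\css_{\lceil p\rceil}=\css_2$: the Kato--Seiler--Simon bound \cite[Theorem 4.1]{si1} used in \eqref{e105} requires Schatten index at least $2$, and for $V\in L^p\setminus L^2$ the operator $M_V(-\dd-z)^{-1}$, whose kernel is $V(x)G_z(x-y)$ with $G_z$ the free resolvent kernel, is not Hilbert--Schmidt; by the identity \eqref{e3} (with $\re z$ negative enough that $1-M_{V_0}R(z,H_0)$ is invertible) the same failure occurs with $H_0$ in place of $-\dd$. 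The paper's remark that the perturbation is ``even $\csp$'' is substantiated only in the regime \eqref{hyp2}, $p\ge2$; in the accretive range $p>1$ what is actually controlled in $\csp$ is the \emph{resolvent difference}, through the symmetrized factorization $|V|^{1/2}u(H_0-z)^{-1}|V|^{1/2}$ of \cite{han2011}. Consequently $h(z)=\det_{\lceil p\rceil}\lp I+M_V(H_0-z)^{-1}\rp$ is not well defined as written; you would have to build the determinant from the symmetrized Birman--Schwinger operator (which your growth estimate already tacitly switches to) and then supply the nontrivial argument identifying its zeros, with multiplicities, with $\s_d(H)$ for the form sum $H$. Fixing this is standard but necessary; fixing the first gap is not.
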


\begin{remark}
The only reason we restricted ourselves with the case of one dimensional Schr\"odinger operator 
$H_0$ as a background is that the class of multidimensional Schr\"odinger operators with spectra 
\eqref{infband} is not well understood. Our technique works for any dimension $d\ge1$, and the 
corresponding problem will be elaborated on elsewhere.
\end{remark}

\section{Distortion for linear fractional transformations}
\label{s1}

The main analytic tool in the proof of Theorem \ref{t1} is the following distortion lemma for 
linear fractional transformations of the form
\begin{equation}\label{lft}
\l_\o(z):=\frac1{z-\o}\,, \quad \o\in\br.
\end{equation}
The argument here is quite elementary (though, rather lengthy).


\begin{figure}[b]
\includegraphics[width=14cm]{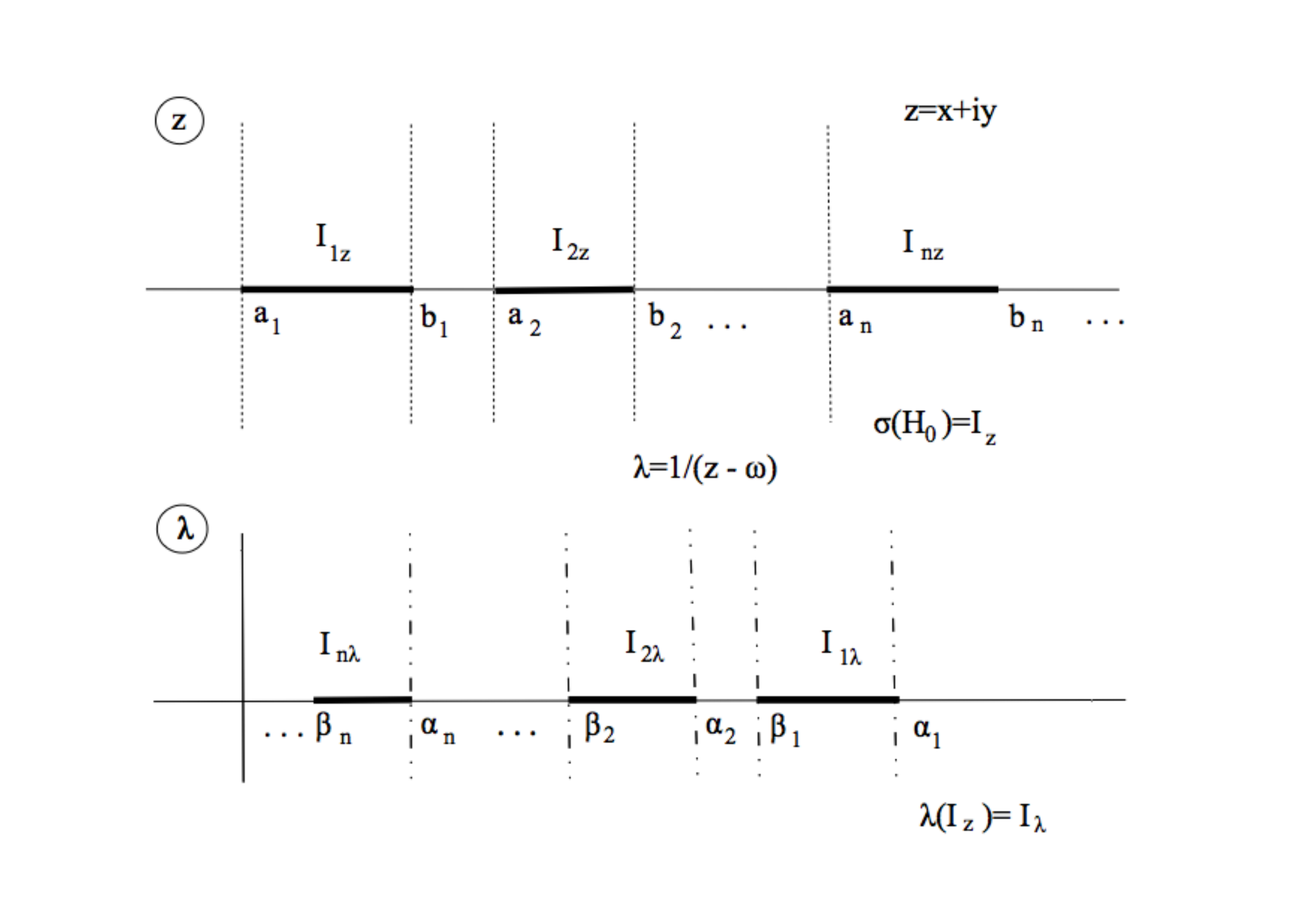}
\caption{Sets $I=\s(H_0)$ and $\l_\o(I)$ with map $\l_\o(z)=\frac 1{z-\o}$.}\label{f1}
\end{figure}

\begin{lemma}\label{l1}
Let
 \begin{equation}\label{infiniteband}
 I=I_z=\bigcup^{\infty}_{k=1}[a_k,b_k], \quad
 0\le a_1<b_1<a_2<b_2<\ldots, \quad a_n\to+\infty, 
 \end{equation}
and let $\l_\o(I)=I_\l$ be its image under the linear fractional transformation $\eqref{lft}$
\begin{equation*} 
 \l_{\o}(I)=I_\l=\bigcup^{\infty}_{k=1}[\b_k(\o), \a_k(\o)], \ \ \b_k(\o)=\frac1{b_k-\o}\,, \ \  \a_k(\o)=\frac1{a_k-\o}\,.
\end{equation*}
Then for $\o<a_1$ the following bounds hold:

for $\re z<a_1$ or $\re z\in I$
\begin{equation}\label{distor1}
\frac{\di(\l_\o(z),\l_\o(I)}{\di(z,I_z)}>\frac1{3|z-\o|(|z-\o|+a_1-\o)}\,;
\end{equation}

for $b_k<\re z<a_{k+1}$, \ $k=1,2,\ldots$
\begin{equation}\label{distor2}
\frac{\di(\l_\o(z),\l_\o(I)}{\di(z,I_z)}
\ge\frac1{2|z-\o|^2}\,\left(1+\frac{a_{k+1}-b_k}{b_k-\o}\right)^{-1}\,.
\end{equation}
Moreover, if $\o\le0$ and the gaps are relatively bounded $\eqref{irelbound}$,
then the unique bound is valid
\begin{equation}\label{distor3}
\frac{\di(\l_\o(z),\l_\o(I)}{\di(z,I_z)}\ge
\frac1{5(1+r(I))}\,\frac1{|z-\o|(|z-\o|+a_1-\o)}, \ \
z\in\bc\backslash I.
\end{equation}
\end{lemma}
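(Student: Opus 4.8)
The plan is to convert the distance ratio into a one‑variable extremal problem on $\br$, settle the pointwise bounds \eqref{distor1} and \eqref{distor2} by a range‑splitting argument, and then deduce \eqref{distor3} by inserting the relative boundedness \eqref{irelbound} into \eqref{distor2} and comparing constants. For the reduction, note that $\o<a_1\le\xi$ for every $\xi\in I$, so $|\xi-\o|=\xi-\o>0$ and the elementary identity $|\l_\o(z)-\l_\o(\xi)|=\frac{|z-\xi|}{|z-\o|(\xi-\o)}$ holds. Taking the infimum over $\xi\in I$ gives the exact formula
\[ \di(\l_\o(z),\l_\o(I))=\frac1{|z-\o|}\inf_{\xi\in I}\frac{|z-\xi|}{\xi-\o}, \]
so every claim reduces to bounding $\inf_{\xi\in I}\frac{|z-\xi|}{\xi-\o}$ from below against $\di(z,I)$. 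Two trivial facts are used throughout: $|z-\xi|\ge\di(z,I)$ for all $\xi\in I$, and (taking $\xi=a_1$) $\di(z,I)\le|z-a_1|\le|z-\o|+(a_1-\o)$; moreover, whenever $\re z\ge\o$ one even has $\di(z,I)\le|z-\o|$, since $(\re z-\o)^2$ dominates $(\re z-c)^2$ for any band edge $c$ with $\o\le c\le\re z$.

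For the pointwise bounds I would establish the target lower bound for each individual $\xi$ and then pass to the infimum. For \eqref{distor1} I split $I$ at the threshold $\xi-\o=3(|z-\o|+a_1-\o)$: below it $|z-\xi|\ge\di(z,I)$ already suffices, while above it the reverse triangle inequality $|z-\xi|\ge(\xi-\o)-|z-\o|>\tfrac23(\xi-\o)$ combined with $\di(z,I)\le|z-\o|+(a_1-\o)$ closes the estimate. The gap bound \eqref{distor2} is the same idea, but the two competing band edges $b_k$ and $a_{k+1}$ must be weighed against each other; writing $1+\frac{a_{k+1}-b_k}{b_k-\o}=\frac{a_{k+1}-\o}{b_k-\o}$ makes the correction factor transparent, and a three‑fold split of $I$ (the part $\xi\le b_k$; the part with $a_{k+1}\le\xi$ and $\xi-\o\le2|z-\o|$; and the part $\xi-\o>2|z-\o|$), together with $|z-\o|\ge b_k-\o$ and $\di(z,I)\le|z-\o|$, produces the factor $1/(2|z-\o|^2)$.

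For the uniform bound \eqref{distor3} I note that every $z\in\bc\backslash I$ satisfies $\re z<a_1$, $\re z\in I$, or $\re z$ in some open gap, so the two previous bounds cover all cases. In the first two cases \eqref{distor1} is already stronger than the claim, since $\frac13\ge\frac1{5(1+r(I))}$ and $|z-\o|+a_1-\o>|z-\o|$. In a gap I would insert \eqref{irelbound}: for $\o\le0$ one has $b_k-\o\ge b_k$, hence $\frac{a_{k+1}-b_k}{b_k-\o}\le\frac{r_k}{b_k}\le r(I)$, so the correction factor in \eqref{distor2} is at least $1/(1+r(I))$; the resulting bound $\frac1{2(1+r(I))|z-\o|^2}$ dominates the target once one checks $5(|z-\o|+a_1-\o)>2|z-\o|$, which is immediate.

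The main obstacle is the gap estimate \eqref{distor2}. Because $I$ is unbounded, the infimum over $\xi$ reflects a genuine competition between making $|z-\xi|$ small and making the weight $\xi-\o$ large, and the minimizer need not sit at the nearest band edge; the splitting thresholds must be chosen so that the cheap bound $|z-\xi|\ge\di(z,I)$ and the growth bound $|z-\xi|\gtrsim\xi-\o$ overlap. Furthermore the gap‑dependent factor $(a_{k+1}-\o)/(b_k-\o)$ must be tracked exactly rather than absorbed, since it is precisely the quantity that \eqref{irelbound} controls in the passage to \eqref{distor3}.
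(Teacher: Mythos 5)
Your proposal is correct, and it follows a genuinely different route from the paper's. The paper first normalizes $\o=0$ by a shift and then argues geometrically: the vertical line through $z$ is mapped by $w\mapsto 1/w$ onto a circle, and the proof splits according to whether $\l=1/z$ lies over the image gaps or over the image bands (with auxiliary ordinates $u_j=\sqrt{x(a_j-x)}$, $v_j=\sqrt{x(b_j-x)}$), treating $\re z<a_1$, $\re z\in I$ and the gap positions separately; \eqref{distor3} is then assembled from the two pointwise bounds exactly as you do, via $r_k/(b_k-\o)\le r_k/b_k\le r(I)$. Your exact reduction
\[
\di(\l_\o(z),\l_\o(I))=\frac1{|z-\o|}\,\inf_{\xi\in I}\frac{|z-\xi|}{\xi-\o}
\]
replaces all of the circle geometry by a one-variable range-splitting, and the splits you describe do close: for \eqref{distor1} the threshold $\xi-\o=3(|z-\o|+a_1-\o)$ works, and for \eqref{distor2} the three pieces close using $|z-\o|>b_k-\o$, $\di(z,I)\le|z-\o|$ and $1+\frac{a_{k+1}-b_k}{b_k-\o}=\frac{a_{k+1}-\o}{b_k-\o}$. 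A notable byproduct: your argument for \eqref{distor1} uses only $\di(z,I)\le|z-a_1|\le|z-\o|+a_1-\o$, which holds for \emph{every} $z\in\bc\bsl I$, so it proves \eqref{distor1} without the restriction ``$\re z<a_1$ or $\re z\in I$'', and hence yields \eqref{distor3} with the better constant $1/3$ and without assuming \eqref{irelbound} at all. This is strictly stronger than the lemma as stated; the paper's gap analysis is lossy at exactly this point because it majorizes $\di(z,I)$ by $|z-a_{k+1}|$ even when the band to the left of the gap is much closer to $z$.

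Two small repairs are needed, neither affecting the substance. First, your blanket claim that $\re z\ge\o$ forces $\di(z,I)\le|z-\o|$ is false in general (take $\o<\re z<a_1$ with $a_1$ large); it requires a band edge $c$ with $\o\le c\le\re z$, which exists precisely in the gap case where you invoke it (take $c=b_k$). Second, \eqref{distor1} asserts a strict inequality, while your estimate on the near region is non-strict pointwise; strictness follows by noting that the near region $I\cap\{\xi-\o\le 3(|z-\o|+a_1-\o)\}$ is compact and nonempty (it contains $a_1$), so the infimum over it is attained, and simultaneous equality $|z-\xi|=\di(z,I)$ and $\xi-\o=3(|z-\o|+a_1-\o)$ is impossible, since the latter forces $|z-\xi|\ge 2|z-\o|+3(a_1-\o)>\di(z,I)$; on the far region every value exceeds $2/3$, which strictly dominates the target, itself at most $1/3$.
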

\begin{proof}
With no loss of generality we can assume that $a_1>0$.

We begin with the case $\o=0$ and put $\l_0=\l=z^{-1}$. If
$z=x+iy$ and $x=\re z\le0$, then $\re\l=x|z|^{-2}\le0$ and so
\begin{equation}\label{below1}
\frac{\di(\l,I_\l)}{\di(z,I_z)}=\frac{|\l|}{|z-a_1|}=\frac1{|z||z-a_1|}\ge
\frac1{|z|(|z|+a_1)}\,.
\end{equation}
Similarly, if $x\in I_z$, then $x\ge a_1$ and
$$ 0<\re\l=\frac{x}{|z|^2}\le\frac1{x}\le
a_1^{-1}=\a_1, \quad \di(\l,[0,\a_1])=|\im\l|=\frac{|y|}{|z|^2}\,.
$$
Since now $\di(z,I_z)=|y|$, we have
\begin{equation}\label{below2}
\frac{\di(\l,I_\l)}{\di(z,I_z)}\ge
\frac{\di(\l,[0,\a_1])}{\di(z,I_z)}=\frac1{|z|^2}>\frac1{|z|(|z|+a_1)}\,.
\end{equation}

\smallskip

Consider now the case when $x=\re z\notin I_z$. Fix $x$ in $k$'s
gap,
\begin{equation}\label{gapsforz}
b_k<x<a_{k+1}, \qquad k=k(x)=0,1,\ldots \end{equation} (we put
$b_0=0$ and treat $(b_0,a_1)$ as a number zero gap). Then
$$ \di(z,I_z)=\min(|z-b_k|,|z-a_{k+1}|), \quad k=1,2,\ldots, \quad \di(z,I_z)=|z-a_1|, 
\quad k=0. $$
Define two sets of positive numbers
$$ u_j=u_j(x), \qquad v_j=v_j(x), \qquad j=k+1,k+2,\ldots $$ by equalities
\begin{equation*}
\re(\l(x+iu_j))=\frac{x}{x^2+u_j^2}=\a_j, \qquad
\re(\l(x+iv_j))=\frac{x}{x^2+v_j^2}=\b_j,
\end{equation*}
or, equivalently,
$$ u_j(x)=\sqrt{x(a_j-x)}, \qquad v_j(x)=\sqrt{x(b_j-x)}. $$
We also put $v_k=0$, so
$$ 0=v_k<u_{k+1}<v_{k+1}<u_{k+2}<v_{k+2}<\ldots, \quad u_n,\ v_n\to\infty, \ n\to\infty. $$

While the point $z$ traverses the line $x+iy$, $y\in\br$, its
image $\l(z)$ describes a circle with diameter $[0,1/x]$. We
discern the following two cases.

{\bf Case 1}. Assume that $\l$ lies over the ``gaps for $\l$''.
For each $k=0,1,\ldots$ there are two options for $\l$: interior
gaps
\begin{equation}\label{intgap}
\re\l\in(\a_{j+1},\b_j)\Longleftrightarrow v_j<|y|<u_{j+1}, \qquad
j=k+1,k+2,\ldots
\end{equation}
and the rightmost gap
\begin{equation}\label{rightgap}
\re\l\in(\a_{k+1},1/x)\Longleftrightarrow 0<|y|<u_{k+1}.
\end{equation}
For gaps \eqref{intgap} we have
 \begin{equation}\label{intgap1}
\di(\l,I_\l)=\min(|\l-\a_{j+1}|,
|\l-\b_j|)=\frac1{|z|}\min\left(\frac{|z-a_{j+1}|}{a_{j+1}},
\frac{|z-b_j|}{b_j}\right)\,.
\end{equation}
Define an auxiliary function $h$ on the right half-line
$$
h(t)=h(t,z):=\frac{|z-t|}{t}=\sqrt{\Bigl(\frac{x}{t}-1\Bigr)^2+y^2},
\quad t>0. $$
 Clearly, $h$ is monotone increasing on $(x,+\infty)$ and
 decreasing on $(0,x)$ with the minimum $h(x)=|y|$. Hence
 \eqref{intgap1} and \eqref{gapsforz} give
\begin{equation*}
\begin{split}
\di(\l,I_\l) &=\frac{\min(h(a_{j+1},z), h(b_j,z))}{|z|}\ge
 \frac{h(b_{j},z)}{|z|}\ge \frac{h(b_{k+1},z)}{|z|} \\ &\ge\frac{h(a_{k+1},z)}{|z|}=
 \frac{|z-a_{k+1}|}{a_{k+1}|z|}\,.
 \end{split}
 \end{equation*}
 Since by \eqref{gapsforz} $\di(z,I_z)\le |z-a_{k+1}|$, we see that
 \begin{equation}\label{bound1}
 \frac{\di(\l,I_\l)}{\di(z,I_z)}\ge \frac1{a_{k+1}|z|}\,.
 \end{equation}

For gaps \eqref{rightgap} let first $k\ge1$. Then as above in
\eqref{intgap1}
 \begin{equation*}
\di(\l,I_\l)=\frac1{|z|}\min\left(\frac{|z-a_{k+1}|}{a_{k+1}},
\frac{|z-b_k|}{b_k}\right)\,,
\end{equation*}
but it is not clear now which term prevails. If
$|z-a_{k+1}|\le|z-b_k|$ then $\di(z,I_z)=|z-a_{k+1}|$ and
$$
\frac{\di(\l,I_\l)}{\di(z,I_z)}=\frac1{|z|}\min\left(\frac1{a_{k+1}},
\frac{|z-b_k|}{b_k|z-a_{k+1}|}\right)=\frac1{a_{k+1}|z|}\,.
$$
Otherwise $|z-a_{k+1}|>|z-b_k|$ implies
$$
\frac{\di(\l,I_\l)}{\di(z,I_z)}=\frac1{|z|}\min\left(\frac1{b_{k}},
\frac{|z-a_{k+1}|}{a_{k+1}|z-b_{k}|}\right)\ge\frac1{a_{k+1}|z|}\,.
$$

Next, for $k=0$ one has $0<x<a_1$, and in case \eqref{rightgap}
$$ \di(\l,I_\l)=|\l-\a_1|=\frac{|z-a_1|}{a_1|z|}\,, \quad
\di(z,I_z)=|z-a_1|, $$ and so
\begin{equation}\label{kzero}
\frac{\di(\l,I_\l)}{\di(z,I_z)}=\frac1{a_1|z|}\,.
\end{equation}
 Finally, in the case of ``gaps for $\l$'' we come to the bound
\begin{equation}\label{gapsforl}
\frac{\di(\l,I_\l)}{\di(z,I_z)}\ge \frac1{a_{k+1}|z|}\,, \qquad
k=0,1,\ldots.
\end{equation}

A modified version of \eqref{gapsforl} will be convenient in the
sequel. For $k\ge1$ in view of $|z|\ge x>b_k$ we have
$$
\frac1{a_{k+1}|z|}\ge\frac{b_k}{a_{k+1}|z|^2} $$ and so for
$k=1,2,\ldots$
\begin{equation}\label{gapsforl1}
\frac{\di(\l,I_\l)}{\di(z,I_z)}\ge \frac1{|z|^2}\,
\left(1+\frac{a_{k+1}-b_k}{b_k}\right)^{-1}=\frac1{|z|^2}\left(1+\frac{r_k}{b_k}\right)^{-1}\,,
\end{equation}
$r_k=a_{k+1}-b_k$ is the length of $k$'s gap. Similarly, for $k=0$
one has from \eqref{kzero}
\begin{equation}\label{gapsforl2}
\frac{\di(\l,I_\l)}{\di(z,I_z)}\ge \frac1{|z|(|z|+a_1)}\,.
\end{equation}

{\bf Case 2}. Assume that $\l$ lies over the ``bands for $\l$''
\begin{equation}\label{bands1}
\re\l\in[\b_j,\a_{j}]\Longleftrightarrow u_j\le |y|\le v_{j},
\qquad j=k+1,k+2,\ldots.
\end{equation}
Now
$$ \di(\l,I_\l)=|\im\l|=\frac{|y|}{|z|^2}\,, $$
\begin{equation*}
\begin{split}
\di(z,I_z) &\le |z-a_{k+1}|\le
|y|+a_{k+1}-x=|y|+\frac{u_{k+1}^2}{x} \le
|y|\left(1+\frac{u_{k+1}}{x}\right)
\\ &=|y|\left(1+\sqrt{\frac{a_{k+1}-x}{x}}\right),
\end{split}
\end{equation*}
so that
\begin{equation}\label{bands2}
\frac{\di(\l,I_\l)}{\di(z,I_z)}\ge \frac1{|z|^2}\,
\left(1+\sqrt{\frac{a_{k+1}}{x}-1}\right)^{-1}\,.
\end{equation}

For $k\ge1$ (interior gap for $z$) inequality \eqref{bands2} can
be simplified in view of $x>b_k$
\begin{equation}\label{bands3}
\frac{\di(\l,I_\l)}{\di(z,I_z)}\ge \frac1{|z|^2}\,
\left(1+\sqrt{\frac{r_k}{b_k}}\right)^{-1}\,.
\end{equation}
Let now $k=0$, i.e., $0<x=\re z<a_1$. In our case
$\di(z,I_z)=|z-a_1|$ and
$$ |y|\ge u_1=\sqrt{x(a_1-x)}. $$
If $|y|\ge 2x$ then $|y|\ge \frac23|z|$ and so
\begin{equation}\label{bands4}
\frac{\di(\l,I_\l)}{\di(z,I_z)}=\frac{|y|}{|z|^2|z-a_1|}\ge
\frac23 \frac1{|z|(|z|+a_1)}\,.
\end{equation}
Otherwise, $|y|<2x$ implies
$$ 2\sqrt{x}>\sqrt{a_1-x}, \qquad x>\frac{a_1}5. $$
It follows now from \eqref{bands2} with $k=0$ that
\begin{equation}\label{bands5}
\frac{\di(\l,I_\l)}{\di(z,I_z)}\ge \frac1{3|z|^2}>\frac13
\frac1{|z|(|z|+a_1)}\,.
\end{equation}

We can summarize the results obtained above in the following two
bounds from below. A combination of \eqref{below1},
\eqref{below2}, \eqref{gapsforl2} and \eqref{bands5} gives
\begin{equation}\label{final1}
\frac{\di(\l,I_\l)}{\di(z,I_z)}>\frac1{3|z|(|z|+a_1)}\,, \qquad
\re z<a_1 \ \ {\rm or} \ \re z\in I_z.
\end{equation}
A combination of \eqref{gapsforl1} and \eqref{bands3} provides
\begin{equation}\label{final2}
\begin{split}
\frac{\di(\l,I_\l)}{\di(z,I_z)} &\ge\frac1{\g_k|z|^2}\,, \qquad
\g_k =\max\left\{1+\frac{r_k}{b_k},\
1+\sqrt{\frac{r_k}{b_k}}\right\}\,, \\ b_k&<\re z<a_{k+1}, \quad
k=1,2,\ldots.
\end{split}
\end{equation}
Since $\g_k<2(1+r_k/b_k)$, the latter can be written as
\begin{equation}\label{final3}
\frac{\di(\l,I_\l)}{\di(z,I_z)}
\ge\frac1{2|z|^2}\,\left(1+\frac{r_k}{b_k}\right)^{-1}\,, \quad
b_k<\re z<a_{k+1}, \quad k=1,2,\ldots.
\end{equation}

To work out the general case $\o\not=0$ and prove \eqref{distor1}
and \eqref{distor2}, it remains only to shift the variable and
apply the results just obtained to the shifted sequence of bands
$$ I_z(\o)=\bigcup_{k\ge1}[a_k-\o,b_k-\o]. $$
The final statement follows from a simple observation
$$ \frac{r_k}{b_k-\o}\le\frac{r_k}{b_k}\le r. $$
The proof is complete.
\end{proof}

\section{Lieb--Thirring type inequalities}
\label{s2}

The key ingredient of the proof of our main statements is the following result of Hansmann 
\cite[Theorem 1]{han1}. Let $A_0=A_0^*$ be a bounded self-adjoint operator on the Hilbert space, 
and let $A$ be a bounded operator with $A-A_0\in\csp$, $p>1$. Then
\begin{equation}\label{hans}
\sum_{\l\in\s_d(A)} \di^p(\l, \s(A_0))\le K_p\|A-A_0\|_{\csp}^p,
\end{equation}
$K_p$ is an explicit (in a sense) constant, which depends only on $p$.
We set
$$
A_0(\o)=R(\o,H_0)=(H_0-\o)^{-1}, \qquad A(\o)=R(\o,H)=(H-\o)^{-1},
$$
$\o$ is defined above, and $\o\in\rho(H_0)\cap\rho(H)$ in view of \eqref{infband} and \eqref{inclus}.

Let $\l=\l_\o(z)=(z-\o)^{-1}$. The Spectral Mapping Theorem implies that
$$
\l\in\s_d(A(\o)) \quad \lp \l\in\s(A_0(\o)) \rp\quad
\Longleftrightarrow  \quad z\in\s_d(H) \quad \lp z\in\s(H_0) \rp.
$$

\medskip
\nt {\it Proof of Theorem \ref{t1}.}
The second resolvent identity reads
\begin{equation*}
R(z,H)-R(z,H_0)=-R(z,H)M_V R(z, H_0), \qquad z\in\rho(H)\cap\rho(H_0).
\end{equation*}
We wish to show that this difference belongs to $\csp$ and to obtain the bound for its $\csp$-norm.

First, we have
\begin{equation}\label{e3}
\begin{split}
W &=W(z):=M_V R(z, H_0)=M_V(-\dd-z)^{-1}(-\dd-z) (H_0-z)^{-1}\\  
&= M_V(-\dd-z)^{-1}\, (1-M_{V_0} (H_0-z)^{-1}),
\end{split}
\end{equation}
and so
$$ \|W(z)\|_\csp\le \|M_V\,R(z,-\dd)\|_\csp \|I-M_{V_0}\,R(z,H_0)\|, 
\quad z\in\rho(H)\cap\rho(H_0). $$
It is clear that
$$ \|I-M_{V_0}\,R(z,H_0)\|\le 1+\frac{\|V_0\|_\infty}{\di(z,I)}=
1+\frac{\|V_0\|_\infty}{|a_1-z|}, \quad \re z<0. $$

Next, write
$$ M_V(-\dd-z)^{-1}=V(x)g_z(-i\nabla), \qquad g_z(x)=(x^2-z)^{-1}, \quad x\in\br. $$
By \cite[Theorem 4.1]{si1}
$$ \|M_V(-\dd-z)^{-1}\|_\csp \le (2\pi)^{-1/p}\|V\|_p\, \|g_z\|_p, \qquad p\ge2. $$
Since $ 2|t-z|^2\ge (t+|z|)^2$ for $t\ge0$ and $\re z<0$, we have
$$\|g_z\|_p\le\sqrt{2}\|g_{-|z|}\|_p $$
and so
\begin{equation}\label{e105}
\begin{split}
&{}\|M_V(-\dd-z)^{-1}\|_\csp \le \frac{C_1}{|z|^{1-1/2p}}\,\|V\|_p, \\
C_1&=C_1(p)=\sqrt{2}\left\{\frac1{2\pi}\int_{-\infty}^\infty \frac{dx}{(x^2+1)^p}\right\}^{1/p}.
\end{split}
\end{equation}
Thus,
\begin{equation}\label{boundw}
\|W(z)\|_\csp\le \frac{C_1(p)\|V\|_p}{|z|^{1-1/2p}}\,\left(1+\frac{\|V_0\|_\infty}{|a_1-z|}\right), 
\quad \re z<0.
\end{equation}

We put $z=\o<\o_1$. Relation \eqref{inclus} implies in view of \cite[Theorem~V.3.2]{kato}
\begin{equation}\label{resnum}
\|R(\o, H)\|\le \frac 1{\di(\o, \ovl{N(H)})}\le \frac1{\o_1-\o}\,,
\end{equation}
and the combination of \eqref{boundw} and \eqref{resnum} leads to the following bound for each 
$\o<\o_1$
\begin{equation}\label{upbound2}
\begin{split}
\|R(\o, H)-R(\o, H_0)\|_\csp^p &\le \|R(\o,H)\|^p\,\|W(\o)\|_\csp^p \\
&\le \frac{C_2(p)\,\|V\|_p^p}{(\o_1-\o)^p|\o|^{p-1/2}}
\left(1+\frac{\|V_0\|_\infty}{a_1+|\o|}\right)^p.
\end{split}
\end{equation}

\smallskip

We go back to \eqref{hans} with
$$ A_0=A_0(\o)=R(\o,H_0), \qquad A=A(\o)=R(\o,H),
$$
so by the Spectral Mapping Theorem, in the notation of Lemma \ref{l1} we have
\begin{equation}\label{e102}
\sum_{\l\in\s_d(A(\o))} \di^p(\l, \s(A_0(\o))=\sum_{z\in\s_d(H)} \di^p(\l_\o(z), \l_\o(I))
\le K_p\,\|R(\o, H)- R(\o, H_0)\|^p_\csp.
\end{equation}
We apply Lemma \ref{l1} in the form \eqref{distor3} to obtain
\begin{equation*}
\sum_{z\in\s_d(H)} \frac{\di^p(z,I)}{|z-\o|^p (|z-\o|+a_{1}+|\o|)^p}\le
\frac{C_3(p,I)\,\|V\|_p^p}{(\o_1-\o)^p|\o|^{p-1/2}}\left(1+\frac{\|V_0\|_\infty}{a_1+|\o|}\right)^p,
\end{equation*}
and \eqref{lith1} follows. The proof is complete. \hfill $\Box$

\medskip

The proof of Theorem \ref{t1} shows that bound \eqref{lith1} essentially depends on the 
parameter $\o$. Roughly speaking, it comes from a bound from below of $\inf \re \s(H)$, and so it 
seems to be rather important to estimate this quantity in terms of $V_0$ and $V$ only.

\medskip
\nt {\it Proof of Theorem \ref{t2}.} \
Put
$$ \O=\{\re z<0\}\bigcap \left\{|a_1-z|>(1+\|V_0\|_{\infty})\right\}\bigcap \left\{|z|^{1-1/2p}> 4C_1(p)(1+\|V\|_p)\right\}, $$
$C_1$ is defined in \eqref{e105}. We wish to show that $\O\subset \rho(H_0)\cap\rho(H)$. Indeed, 
by \eqref{boundw},
\begin{equation}\label{winf}
\|W(z)\|_\infty\le\|W(z)\|_{\csp}\le \frac{\|V\|_p}{2(1+\|V\|_p)}<\frac12, \quad z\in\O,
\end{equation}
so $I+W(z)$ is invertible and $\|(I+W(z))^{-1}\|<2$. An application of the identity 
$H-z=(1+W(z))(H_0-z)$ completes the proof of our claim.

Next, write the difference of the resolvents in another way
$$ R(z,H)-R(z,H_0)=-R(z,H_0)(1+W(z))^{-1}\,W(z) $$
to obtain for $z\in\O$
\begin{equation*}
\begin{split}
\|R(z,H)-R(z,H_0)\|_{\csp} &\le \|R(z,H_0)\|\,\|(1+W(z))^{-1}\|\,\|W(z)\|_{\csp} \\
&\le \frac{\|V\|_p}{|a_1-z|(1+\|V\|_p)}\le \frac{\|V\|_p}{(1+\|V_0\|_\infty)(1+\|V\|_p)}\,.
\end{split}
\end{equation*}

It is clear by the definition of $\O$ that if $t\in\O$, $t<0$, then $\{\re z\le t\}\subset\O$.
Take $z=\o'<0$ so that
\begin{equation}\label{omega11}
\frac{|\o'|}2=\frac{a_1}2+1+\|V_0\|_{\infty}+\lp 4C_1(1+\|V\|_p)\rp^{1/(1-1/2p)}.
\end{equation}
It is easy to check that $\{z:\re z<\frac{\o'}2\}\subset\O$, so, in particular, $\o'\in\O$ and hence
$$ \|R(\o',H)-R(\o',H_0)\|_{\csp}\le \frac{\|V\|_p}{(1+\|V_0\|_\infty)(1+\|V\|_p)}\,. $$
Once again, \eqref{hans} says
\begin{equation}\label{hansres2}
\sum_{\l\in\s_d(A(\o'))} \di^p(\l,\s(A_0(\o')))\le K_p 
\lp\frac{\|V\|_p}{(1+\|V_0\|_\infty)(1+\|V\|_p)}\rp^p
\end{equation}
for $p>1$ and, using Lemma \ref{l1}, we come to
\begin{equation*}\label{e1111}
\sum_{z\in\s_d(H)} \frac{\di^p(z,I)}{|z-\o'|^p (|z-\o'|+a_{1}+|\o'|)^p}\le C_4(p,I) \lp\frac{\|V\|_p}{(1+\|V_0\|_\infty)(1+\|V\|_p)}\rp^p\,.
\end{equation*}
By the choice of $\o'$ \eqref{omega11},  we have $\re z\ge\o'/2$ for $z\in\s_d(H)$, and so
$$ |z-\o'|\ge\frac{|\o'|}2>\frac{|a_{1}|+|\o'|}4\,, $$
and
\begin{equation*}
\begin{split}
|z-\o'|+a_{1}+|\o'| &< 5|z-\o'|, \\
|z-\o'|(|z-\o'|+a_{1}+|\o'|) &< 5|z-\o'|^2.
\end{split}
\end{equation*}
Next, $|z-\o'|\le (1+|z|)(1+|\o'|)\le 2|\o'|(1+|z|)$ and hence
\begin{equation*}
\begin{split}
\sum_{z\in\s_d(H)} \frac{\di^p(z,I)}{(1+|z|)^{2p}} &\le C_5(p,I)|\o'|^{2p}\lp\frac{\|V\|_p}{(1+\|V_0\|_\infty)(1+\|V\|_p)}\rp^p \\
&\le C_6(p,I)(1+\|V_0\|_{\infty})^{p}(1+\|V\|_p)^{p\frac{2p+1}{2p-1}}\,\|V\|_p^p.
\end{split}
\end{equation*}
The proof is complete. \hfill $\Box$

\medskip\nt {\it Proof of Theorem \ref{t3}.} \
For accretive perturbations we have \newline $\s(H)\subset \{z: \re z\ge0\}$, so one 
can take $\o_1=0$.

The lower bound for the difference of resolvents is the same as above in Theorem \ref{t1}. 
It is a consequence of the result of Hansmann and Lemma \ref{l1}
\begin{equation*}
\|R(\o, H)- R(\o, H_0)\|^p_\csp \ge \sum_{z\in\s_d(H)} \frac{\di^p(z,I)}{(|z-\o|-\o)^{2p}}, \quad p>1.
\end{equation*}
As for the upper bound, we follow the line of reasoning from \cite[Proof of Theorem 3.2]{han2011}, 
where such bound was proved in the case $V_0=0$. As a matter of fact, the argument goes through under 
assumption \eqref{hyp1} as well. At any rate, we have
$$ \|R(-a, H)- R(-a, H_0)\|^p_\csp \le \frac{C_5(p)}{a^{2p-1/2}}\,\|V\|_p^p, \quad a:=-\o>0. $$
Since $\sqrt{2}|z+a|\ge |z|+a$ for $\re z>0$ and $a>0$, we come to the following inequality
\begin{equation}\label{lith4}
\sum_{z\in\s_d(H)} \frac{\di^p(z,I)}{(|z|+a)^{2p}}\le \frac{C_6(p)}{a^{2p-1/2}}\,\|V\|_p^p, 
\quad a>0.
\end{equation}
As in \cite[Proof of Theorem 3.3]{han2011}, we multiply \eqref{lith4} through by $(1+a)^{2\eps}$, 
$\eps>0$, to obtain 
$$ \sum_{z\in\s_d(H)} \frac{\di^p(z,I)\,a^{2p-3/2+\eps}}{(|z|+a)^{2p}(1+a)^{2\eps}}\le
\frac{C_6(p)\|V\|_p^p}{a^{1-\eps}(1+a)^{2\eps}}, \quad a>0, $$
and then integrate the latter inequality with respect to $a\in(0,\infty)$. The proof is complete. 
\hfill $\Box$

\medskip\nt
{\it Acknowledgments.} The authors are grateful to S. Denisov, F. Gesztesy and M. Hansmann for 
valuable discussions on the subject of the article. The paper was prepared during the visit of 
the first author, funded by the IdEx programm of University of Bordeaux,  to the Institute of 
Mathematics of Bordeaux (IMB UMR5251). He  would like to thank this institution for the hospitality.

\end{document}